\newtheorem{theorem}{Theorem}[section]
\newdefinition{definition}{Definition}[section]                                                            
\def\tsc#1{\csdef{#1}{\textsc{\lowercase{#1}}\xspace}}
\begin{document}
	\title [mode = title]{Impact of End Point Conditions on the Representation and Integration of Fractal Interpolation Functions and Well Definiteness of the Related Operator} 	
	\author[1]{Aparna M.P} 
	\fnmark[1]	
	\author[2]{P. Paramanathan}[orcid=0000-0003-0688-4858]
	\cormark[1]
	\fnmark[2]
	\address{Department of Mathematics, Amrita School of Engineering, Coimbatore,Amrita Vishwa Vidyapeetham,India.}
	\cortext[cor1]{Corresponding author}
	\fntext[fn1]{mp$\_$aparna@cb.students.amrita.edu (Aparna M.P); p$\_$paramanathan@cb.amrita.edu (P. Paramanathan)}
	\begin{abstract}
		Fractal interpolation technique is an alternative to the classical interpolation methods especially when a chaotic signal is involved. 
		The logic behind the formulation of an iterated function system for the construction of fractal interpolation functions is to divide the entire interpolating domain into subdomains and define functions on each subdomain piecewisely.  The objective of this paper is to explore the significance of the end point conditions on the graphical representation of the resultant functions and their numerical integration. The central problem in the formulation of the IFS, the continuity of the fractal interpolation functions, is addressed with an explanation on the techniques implemented to resolve the problem. Instead of an analytical expression, the fractal interpolation functions are always represented in terms of recursive relations. This paper further presents the derivation of these recursive relations and proposes a straightforward method to find the approximating function, involved in these relations. 
		\end{abstract}
	
	\begin{keywords}
		Fractal interpolation function (FIF) \sep Iterated function system (IFS) \sep Read-Bajraktarevic operator \sep Attractor
	\end{keywords}
	
	\maketitle
	\section{Introduction}
	\indent Fractal analysis deals with signals of chaotic behaviour. The chaotic nature of the signals can easily be identified by measuring the dimension of their graphs. The dimension must be non integer for such signals. Another indicator of the chaotic nature is that of self similarity. \\
	\indent In numerical approximation, when a signal is provided with only the sample values, there are numerous techniques available to find the interpolating function that passing through these values. But, when the signal is of self similar nature and its graph is of non integer dimension, different methods are to be applied. Thus, fractal interpolation functions (FIF) have been invented by Barnsley to approximate signals with such characteristics. The functions obtained via fractal interpolation are of non-differentiable nature. These characteristics enable them to represent most of the real world signals, whereas this representation is not possible with the usual interpolation functions (IF). The dimension of fractal interpolation function is a measure of the complexity of the signals, facilitating the comparison of recordings. Thus, fractal interpolation is considered as a generalisation of all the existing interpolation techniques.
	
	\indent The fractal interpolation functions are deeply connected with iterated function systems (IFS). Once a proper IFS has been defined, it is easier to derive the recursive formula for the fractal interpolation function. Fractal interpolation functions have been constructed for single variable functions, bivariable functions and multivariable functions. Although the fundamental aim is to create an IFS, there are some slight variations in the IFS according to the interpolating domain.
	\\ \indent The theory of fractal interpolation functions was put forward by Barnsley in \cite{ff}. %"Fractal functions and interpolation".
	He introduced IFS and arrived at fractal interpolation functions as attractors of the IFS. Some of the properties of such interpolation functions have been explained in the paper and certain applications have been explored. The technique for the numerical integration using FIF was introduced by Navascues and Sebastian \cite{ni}. %"Numerical integration of affine fractal functions." 
	The extension of the concept of fractal interpolation to two dimension was first carried out by Massopust \cite{fs}. %"Fractal surfaces". 
	The construction was based on a triangular region with  interpolating points on the boundary are required to be coplanar. The coplanarity condition was later removed by Germino and Hardin in their work \cite{fi}. % "FIS and related 2D MRA." 
	They constructed fractal interpolation surfaces on triangles with arbitrary boundary data. The paper also considers the construction over polygons with arbitrary interpolation points. In \cite{bf}, L. Dalla considered rectangular interpolating domain where the interpolation points on the boundary were collinear. By using fold-out technique, Malysz introduced FIS over rectangles \cite{md}. The construction requires all the vertical scaling factors to be constant. Metzler and yun generalised this method by taking the vertical scaling factor as a function \cite{cf}. Hardin and Massopust constructed multivariable fractal interpolation functions \cite{fr}. A different construction with discontinuous FIS was considered in \cite{ifs}. In \cite{fsw}, Massopust constructed  bivariable fractal interpolation functions over rectangles by considering the tensor product of two univariable fractal interpolation functions. Xie and Sun considered another construction wherein the attractor was not the graph of a continuous function \cite{bfi}. The lack of continuity is later solved  by Vasileios Drakopoulos and Ploychronis Manousopoulos \cite{nt}. In \cite{nl}, Kobes and Penner considered non linear generalisation of fractal interpolation functions with an aim to apply in image processing fields.	Three dimensional IFS was considered in \cite{sf}. The construction of nonlinear fractal interpolation function is proposed in \cite{an}. In \cite{nn}, Ri and S.I introduced the construction of a bivariable FIF using Matkowski fixed point theorem and Rakotch contractions.\\
	\indent This paper is an attempt to investigate the impact of the end point conditions on the IFS for the construction and integration of fractal interpolation functions. Also, it explains the different techniques used to prove the well definiteness of the Read-Bajraktarevic operator whose fixed point is the required fractal interpolation function. This study begins with some basic definitions as a prerequisite to understand the rest of the sequel. A quick recap of the important steps in the construction is provided then. The recursive relation, that is to be satisfied by the fractal function has been proven logically and analytically. The next section presents the difference between an interpolation function and a fractal interpolation function for the same data set. The relation between the functions in the IFS and the approximating function involved in the recursive relation is established then. An easier method to find the approximating function is also provided. Following which, the significance of the end point conditions are explored and the consequences of violating them have been shown. Referring the construction procedure, it is observed that the fundamental problem in the construction of a bivariable fractal interpolation function is that of continuity. Since the Read Bejrakarevic operator is not well defined at the common boundaries of each subdomain, the resulting fixed point, i.e, the fractal interpolation function is not continuous. Finally, this paper concludes with an explanation on the different techniques implemented to solve the problem of well definiteness for a rectangular interpolating domain.
	\section{Prerequisites}
	The basic terminologies, definitions and results related to the topic are given below: %	\vspace{.1cm}\\
	%\textbf{Complete metric space}
	%	\vspace{.2cm}\\
	\begin{definition}
		Let $(X,d)$ be a metric space. Then, $(X,d)$ is a complete metric space if every cauchy sequences in $X$ converges in the metric space $(X,d).$ 
	\end{definition}
	\begin{definition}
		Let $(X,d)$ be a metric space. A function $f:X \rightarrow X$ is said to be a contraction map if, for any $x,y \in X,$ $$d(f(x),f(y)) \leq r. d(x,y)$$ for some $r$ such that $0 \leq r <1.$
	\end{definition}
	\begin{theorem}[The Contraction Mapping Theorem] \cite{fe}
		Let $f:X \rightarrow X$ be a contraction mapping on a complete metric space $(X,d)$. Then, $f$ possesses exactly one fixed point $x_{f} \in X$ and moreover for any point $x\in X$, the sequence $\{f^{on}(x): n=0,1,2,...\}$ converges to $x_{f}.$  That is,
		
		$$\lim\limits_{n \rightarrow \infty} f^{on}(x)=x_{f,}$$ for each $x \in X.$
	\end{theorem}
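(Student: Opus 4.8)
The plan is to prove existence, uniqueness, and global convergence in three stages via the classical iterative argument. First I would fix an arbitrary starting point $x_0 \in X$ and form the orbit $x_n = f^{on}(x_0)$. Using the contraction hypothesis $d(f(x),f(y)) \leq r\, d(x,y)$ with $0 \leq r < 1$, a short induction gives $d(x_{n+1}, x_n) \leq r^n\, d(x_1, x_0)$, since each application of $f$ shrinks the gap between successive iterates by a factor of at most $r$.

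Next I would show that $\{x_n\}$ is Cauchy. Applying the triangle inequality across the intermediate terms and summing the resulting geometric series yields, for $m > n$, the estimate $d(x_m, x_n) \leq \frac{r^n}{1-r}\, d(x_1, x_0)$. Because $0 \leq r < 1$, the factor $r^n \to 0$, so the right-hand side becomes arbitrarily small for large $n$; this is the key bound and the technical heart of the argument. Completeness of $(X,d)$ then guarantees that the Cauchy sequence converges to some limit $x_f \in X$.

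To see that $x_f$ is a fixed point, I would use that every contraction is continuous (indeed Lipschitz): passing to the limit in the relation $x_{n+1} = f(x_n)$ gives $x_f = f(x_f)$. For uniqueness, I would suppose $x_f$ and $y_f$ were both fixed points, whence $d(x_f, y_f) = d(f(x_f), f(y_f)) \leq r\, d(x_f, y_f)$, which forces $(1-r)\, d(x_f, y_f) \leq 0$ and hence $d(x_f, y_f) = 0$, so $x_f = y_f$. Since the starting point $x_0$ was arbitrary throughout, the same construction shows that the orbit of \emph{every} point converges to this unique fixed point, establishing $\lim_{n \rightarrow \infty} f^{on}(x) = x_f$ for each $x \in X$.

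The main obstacle is really just the Cauchy estimate: once the geometric-series bound $\frac{r^n}{1-r}$ is in hand, completeness and continuity do the rest almost automatically. No single step is deep, but care is needed to set up the telescoping triangle-inequality sum correctly so that the geometric tail appears cleanly, and to record that $r < 1$ is used in two distinct places — to sum the series and to force uniqueness.
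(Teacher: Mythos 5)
Your proposal is a correct and complete version of the classical iterative argument for the Banach contraction principle: the telescoping geometric-series bound $d(x_m,x_n)\leq \frac{r^n}{1-r}d(x_1,x_0)$, completeness, continuity of $f$, and the uniqueness estimate are all in order. The paper itself gives no proof of this theorem --- it is quoted as a prerequisite with a citation to Barnsley's \emph{Fractals Everywhere} --- and the argument in that reference is essentially the same one you give, so there is nothing to reconcile.
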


	\begin{definition}
		A hyperbolic iterated function system (IFS) consists of a complete metric space $(X,d)$ together with a finite set of contraction mappings $f_{n}:X \rightarrow X, n=1,2,...N$ for some $N \in \mathbb{N}.$ It is generally denoted as $\{X; f_{n}, n=1,2,...N\}.$ Its contractivity factor is $s=max\{s_{n}:n=1,2,..N\}$ where $s_{n}$ is the contractivity factor of $f_{n}.$
	\end{definition}
	\begin{definition}
		Consider a hyperbolic iterated function system $\{X; f_{n}, n=1,2,...N\}.$ An operator $F:H(X)\rightarrow H(X)$ such that $$F(B)=\cup_{n=1}^{N}f_{n}(B)$$ is known as Hutchinson operator.	
	\end{definition}
	\begin{theorem}
		Let $\{X; f_{n}, n=1,2,...,N\}$ be a hyperbolic IFS for the given set of data. Let $s=max\{s_{n}:n=1,2,..N\}$ be the contractivity factor of the hyperbolic IFS. The map $F:H(X) \rightarrow H(X)$ defined by $$F(B)=\cup_{n=1}^{N}f_{n}(B),$$  for all $B \in H(X)$ is a contraction mapping on the complete metric space $(H(X),h(d))$ with contractivity factor $s.$ $i.e,$ $$h(F(B),F(C)) \leq s h(B,C)$$ for all $B, C \in H(X).$ Its unique fixed point $A \in H(X)$ is such that $$A=F(A)=\cup_{n=1}^{N}f_{n}(A)$$ and is given by $$A=\lim\limits_{n \rightarrow \infty}F^{on}(B)$$ for any $B \in H(X).$ The unique, fixed point of the operator $F$ is known as the attractor of the IFS.	
	\end{theorem}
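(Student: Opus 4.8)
The plan is to reduce the entire statement to an application of the Contraction Mapping Theorem (Theorem 2.1). That theorem already delivers existence and uniqueness of a fixed point $A$ together with the convergence $A=\lim_{n\to\infty}F^{on}(B)$ for every starting set $B$, so the only genuine work is to verify its two hypotheses for the pair $(H(X),h(d))$ and the map $F$: that $(H(X),h(d))$ is a complete metric space and that $F$ is a contraction with factor $s$. Completeness of the Hausdorff metric space $(H(X),h(d))$ whenever $(X,d)$ is complete is a standard fact, so I would recall or cite it and then concentrate all the effort on the contractivity estimate $h(F(B),F(C))\le s\,h(B,C)$.

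Before estimating, I would check that $F$ actually maps $H(X)$ into itself. Each $f_n$ is continuous, being a contraction, so $f_n(B)$ is compact whenever $B$ is compact; a finite union of compact sets is compact, hence $F(B)=\cup_{n=1}^{N}f_{n}(B)\in H(X)$ and $F$ is well defined.

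The contractivity is then assembled from two estimates. First, for a single map I would unwind the definition
$$h(P,Q)=\max\Big\{\,\max_{p\in P}\min_{q\in Q}d(p,q),\ \max_{q\in Q}\min_{p\in P}d(p,q)\,\Big\}$$
and use $d(f_{n}(x),f_{n}(y))\le s_{n}\,d(x,y)$ to obtain $h(f_{n}(B),f_{n}(C))\le s_{n}\,h(B,C)\le s\,h(B,C)$. Second, I would establish the union inequality
$$h\Big(\bigcup_{n=1}^{N}P_{n},\ \bigcup_{n=1}^{N}Q_{n}\Big)\le \max_{1\le n\le N} h(P_{n},Q_{n}),$$
valid for all $P_{n},Q_{n}\in H(X)$. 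Combining the two then gives
$$h(F(B),F(C))\le \max_{n} h(f_{n}(B),f_{n}(C))\le \max_{n} s_{n}\,h(B,C)=s\,h(B,C),$$
which is exactly the required bound; since $s<1$, the Contraction Mapping Theorem finishes the argument.

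The step that needs the most care is the union inequality, because the Hausdorff metric is built from an asymmetric pair of max–min distances that must each be controlled separately. For the one-sided distance I would argue that any point of $\bigcup_{n}P_{n}$ lies in some $P_{m}$, and its distance to $\bigcup_{n}Q_{n}$ is no larger than its distance to $Q_{m}$, hence bounded by the one-sided distance from $P_{m}$ to $Q_{m}$ and therefore by $\max_{n}h(P_{n},Q_{n})$; taking the supremum over all such points, and then repeating the argument with the roles of the $P_{n}$ and $Q_{n}$ reversed, yields the claimed bound. The finiteness of the index set $\{1,\dots,N\}$ is what legitimises the maxima and guarantees compactness of the union, so I would keep track of it throughout.
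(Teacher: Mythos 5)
Your proposal is correct: the reduction to the Contraction Mapping Theorem via (i) completeness of $(H(X),h(d))$, (ii) the single-map estimate $h(f_{n}(B),f_{n}(C))\leq s_{n}\,h(B,C)$, and (iii) the union inequality $h(\cup_{n}P_{n},\cup_{n}Q_{n})\leq \max_{n}h(P_{n},Q_{n})$ is exactly the standard Hutchinson--Barnsley argument, and each step you sketch (in particular the one-sided max--min bookkeeping for the union inequality and the check that $F$ preserves compactness) is sound. Note, however, that the paper itself offers no proof of this statement --- it is quoted in the Prerequisites section as a known background result from Barnsley's \emph{Fractals Everywhere} --- so there is no in-paper argument to compare against; your outline matches the classical proof found in that reference.
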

	
	\section{Quick Recap on the Steps Involved in the Construction of Single Variable Fractal Interpolation Functions}
	The construction of a fractal interpolation function involves mainly three steps. \\ %The method is revised for single variable functions. 
	\noindent The first and foremost step is the construction of an iterated function system (IFS) for the data set.  \\
	\noindent The given data set is of the form $\{(t_{n},x_{n}):n=0,1,...,N\},$ for some $N \in Z, N \geq 2,$ where $t_{n}'s \in R$  are the input arguments and the output arguments $x_{n}'s$ are such that $x_{n}=f(t_{n})$ for every $n.$ Moreover, the input arguments are ordered by the usual ordering, $t_{0} <t_{1} < ...<t_{N}.$
	\\
	\noindent Given a data set, the following are the procedures to arrive at an IFS:
	\begin{enumerate}
		\item Specify the interpolation domain, $D,$ consisting of the data points. It is the set on which the fractal interpolation function is defined. For a single variable function, $D$ will be a closed interval. $D$ can be a triangle, rectangle, circle, polygon or any two dimensional shape for a two variable function. Likewise, $D$ will be an n-dimensional shape when an n-variable function is considered. The data set will be a subset of $D\times R.$ 
		\item Create contractions $L_{n}$ from $D$ into each of its subparts $D_{n},$ such that each $L_{n}$ is a homeomorphism and satisfy the end point conditions. The end point conditions depend upon the domain $D.$
		\item Choose a vertical scaling factor between -1 and 1.
		\item Define continuous functions $F_{n}:D \times R \rightarrow R$ such that it satisfies the end point conditions and contractive in the output argument. Here also, the end point conditions depend upon the domain $D.$ %The significance of these end point conditions are stated in the next section. 
		Usually, $F_{n}$ is defined such that $F_{n}(t,x)=\alpha_{n}x+q_{n}(t), $ where $q_{n}$ is a continuous function on $D.$ If $g_{0}$ is an approximating function to the data set, there exists a connection between $g_{0}$ and $q_{n}.$ This relation is specifically explained later for a single variable function and an alternative method for finding the approximation function $g_{0}$ is provided.		
	\end{enumerate}
	\noindent Once an IFS has been defined, the next step is to make it hyperbolic. Since the IFS may not be hyperbolic in Euclidean metric, a new metric has been defined based on a real number $\theta$ on $I\times R$, equivalent to Euclidean metric. The IFS turns out to be hyperbolic once a proper $\theta$ has been chosen. The next and the final step into the fractal interpolation function is to establish the existence of a continuous function defined on interpolating domain such that the function interpolates the data and that the graph of the function is the attractor of the IFS. In order to prove the existence of such a function, the following points have to be verified. 
	\begin{itemize}
		\item Define a complete metric space that consists of the set of all continuous functions defined on the interpolating domain, satisfying some properties, depending on the domain. 
		\item Define an operator, called Read-Bajraktarevic operator, on the complete metric space and make it well defined. 
		\item Prove the contractivity of the operator.
		\item Verify the unique, fixed point of the operator interpolates the data.
		\item Verify graph of unique, fixed point of the operator is the attractor of the IFS.
	\end{itemize} 
	Then, the unique, fixed point of the operator is the required fractal interpolation function to the given data set.
	\begin{theorem}
	The fractal interpolation function satisfies the recursive relation $$f(t)=F_{n}({L_{n}^{-1}(t), f(L_{n}^{-1}(t))}), \,\,\,\, n=1,2,...,N, \,\, t \in D_{n}.$$
		\end{theorem}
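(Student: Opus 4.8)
The plan is to derive the recursive relation from the single most important structural fact established in the construction, namely that the graph of the fractal interpolation function is the attractor of the IFS, and then to translate the set-theoretic self-referential equation for that attractor into a pointwise functional equation for $f$. Throughout I write the maps of the IFS on $D\times\mathbb{R}$ in their natural coordinate form $w_{n}(t,x)=(L_{n}(t),F_{n}(t,x))$, so that the IFS reads $\{D\times\mathbb{R};\,w_{n},\,n=1,2,\dots,N\}$, and I denote by $G=\{(t,f(t)):t\in D\}$ the graph of the sought function $f$.

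First I would invoke the Hutchinson-operator theorem stated above: the attractor $A$ is the unique nonempty compact set with $A=\bigcup_{n=1}^{N}w_{n}(A)$. Since the fractal interpolation function is, by construction, precisely the continuous function whose graph equals this attractor, we obtain
$$G=\bigcup_{n=1}^{N}w_{n}(G).$$
This single identity carries all the information required; the rest of the argument merely reads off what it says coordinatewise.

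Next I would fix an index $n\in\{1,\dots,N\}$ and a point $t\in D_{n}$, and exploit the hypothesis that $L_{n}\colon D\to D_{n}$ is a homeomorphism, hence a bijection onto $D_{n}$. Setting $\tau=L_{n}^{-1}(t)\in D$, the point $(\tau,f(\tau))$ lies in $G$, so its image
$$w_{n}\bigl(\tau,f(\tau)\bigr)=\bigl(L_{n}(\tau),\,F_{n}(\tau,f(\tau))\bigr)=\bigl(t,\,F_{n}(L_{n}^{-1}(t),f(L_{n}^{-1}(t)))\bigr)$$
belongs to $w_{n}(G)\subseteq G$. Because $G$ is the graph of a single-valued function, any element of $G$ whose first coordinate equals $t$ must have second coordinate equal to $f(t)$; comparing the second coordinates therefore yields the asserted relation $f(t)=F_{n}(L_{n}^{-1}(t),f(L_{n}^{-1}(t)))$ for every $t\in D_{n}$. (Equivalently, one may phrase this as the fixed-point identity $f=Tf$ for the Read-Bajraktarevic operator $T$ defined by $(Tg)(t)=F_{n}(L_{n}^{-1}(t),g(L_{n}^{-1}(t)))$ on $D_{n}$, which is the same statement cast in operator language.)

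The step I expect to be the genuine obstacle is not this coordinate comparison, which is routine, but the prior assertion that the attractor really is the graph of a single-valued, continuous function — equivalently, that the expressions furnished by neighbouring subdomains agree at the shared endpoints of the $D_{n}$. This is exactly where the end point conditions on $L_{n}$ and $F_{n}$ must be used, guaranteeing that $\bigcup_{n}w_{n}(G)$ projects bijectively onto $D$ and that $f$ is well defined. I would accordingly make explicit that the recursive relation is being asserted on each $D_{n}$ separately, deferring the consistency at the overlaps to the later discussion of well-definiteness of the Read-Bajraktarevic operator.
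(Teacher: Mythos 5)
Your proposal is correct, but it runs in the opposite logical direction from the paper's argument. The paper defines the Read--Bajraktarevic operator $T$ on the complete metric space $\mathbb{F}$ of continuous functions matching the end values, checks well-definiteness at the subinterval endpoints using the end point conditions, proves $d(Tf,Tg)\leq\delta\,d(f,g)$ with $\delta=\max_{n}|\alpha_{n}|<1$, and obtains the recursive relation as the literal statement of the fixed-point identity $f=Tf$ furnished by the contraction mapping theorem; existence of $f$ and the relation are delivered simultaneously. You instead take as your starting point that the graph $G$ of the FIF is the attractor, read off $G=\bigcup_{n}w_{n}(G)$ from the Hutchinson theorem, and compare second coordinates using single-valuedness of the graph. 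This is a clean and rigorous version of the paper's secondary ``logical'' derivation (the short paragraph at the end of its proof), and it is valid \emph{provided} the attractor-is-a-graph fact is available independently. The one caveat worth flagging: in the paper's own framework that fact is listed as a \emph{later} verification step, and it is standardly proved \emph{from} $f=Tf$ (via $G(Tf)=\bigcup_{n}w_{n}(G(f))$ and uniqueness of the attractor), so taking it as your premise inverts the logical order and risks circularity unless you supply a direct proof that the attractor projects bijectively onto $D$ and is the graph of a continuous function --- which is essentially the same well-definiteness work the paper does on the operator side. You correctly identify this as the genuine obstacle and defer it, so the proposal is sound as a conditional argument; what the paper's route buys is that this obstacle is discharged once, inside the contraction-mapping setup, rather than postponed.
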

\begin{proof}
	Let $\mathbb{F}$ be the set of all continuous functions $f:I \rightarrow R$ where $I=[t_{0},t_{N}]$ such that $f(t_{0})=x_{0}$ and $f(t_{N})=x_{N}.$ Trivially, $\mathbb{F}$ is a complete metric space with respect to the supremum metric. Now, define an operator $T:\mathbb{F} \rightarrow \mathbb{F}$ by $(Tf)(t)=F_{n}(L_{n}^{-1}(t), f(L_{n}^{-1}(t)) ).$ $T$ is continuous on each open interval $(t_{n-1}, t_{n}).$ To prove the continuity of $T$ at the end points of each subinterval, consider an arbitrary point $t_{n} \in I_{n}.$ Then, 
	\begin{align*}
		(Tf)(t_{n}) &=F_{n}(L_{n}^{-1}(t_{n}), f(L_{n}^{-1}(t_{n})) )\\
		&=F_{n}(t_{N}, f(t_{N}))\\
		&=F_{n}(t_{N},x_{N})\\
		&=x_{n}
	\end{align*}
	Since $t_{n}$ is also a point in $I_{n+1},$
	\begin{align*}
		(Tf)(t_{n}) &=F_{n+1}(L_{n+1}^{-1}(t_{n}), f(L_{n+1}^{-1}(t_{n})) )\\
		&=F_{n+1}(t_{0}, f(t_{0}))\\
		&=F_{n+1}(t_{0},x_{0})\\
		&=x_{n}
	\end{align*}
	Therefore, $T$ is continuous everywhere. To complete the proof of well definiteness, consider
	\begin{align*}
		(Tf)(t_{0}) &=F_{1}(L_{1}^{-1}(t_{0}), f(L_{1}^{-1}(t_{0})) )\\
		&=F_{1}(t_{0}, f(t_{0}))\\
		&=F_{1}(t_{0},x_{0})\\
		&=x_{0}
	\end{align*} and
	\begin{align*}
		(Tf)(t_{N}) &=F_{N}(L_{N}^{-1}(t_{N}), f(L_{N}^{-1}(t_{N})) )\\
		&=F_{N}(t_{N}, f(t_{N}))\\
		&=F_{N}(t_{N},x_{N})\\
		&=x_{N}.
	\end{align*} Now, it remains to show that $T$ is contractive. For that, let $f,g \in \mathbb{F}.$ Then, 
	
	\begin{align*}
		d(Tf(t), Tg(t)) &=|Tf(t)-Tg(t)|\\ &=|\alpha_{n}||f(L_{n}^{-1}(t))- g(L_{n}^{-1}(t))|\\ &\leq |\alpha_{n}| d(f,g).	
	\end{align*}
	Hence, $d(Tf,Tg)\leq \delta d(f,g),$ where $\delta=max_{n}\{|\alpha_{n}|\}<1.$
	Now, by contraction mapping theorem, $T$ has a unique, fixed point $f$ in $\mathbb{F}.$\\
	$i.e,$ $(Tf)(t)=f(t),$ which implies $f(t)=F_{n}({L_{n}^{-1}(t), f(L_{n}^{-1}(t))}), \,\,\,\, n=1,2,...,N, \,\, t \in D_{n}.$
	From the definition of $F_{n}$ used for a single variable function, it is evident that $f(t)=\alpha_{n}f(L_{n}^{-1}(t))+q_{n1}L_{n}^{-1}(t)+q_{n0}.$
	\vspace{0.2cm}\\
	\indent This relation can also be achieved logically by going through the IFS theory as follows: 
	\vspace{0.2cm}
	\\
	\indent The given initial data set was  $\{(t_{n},x_{n}):n=0,1,...,N\},$ such that $x_{n}=f(t_{n})$ for every $n.$ $i.e,$ $f$ is an interpolation function to this data set. Now, by applying the functions in the IFS, the data set is transformed into $\{ (L_{n}(t), F_{n}(t,x)): n=1,2,..,N\},$ for which also $f$ is an interpolation function. $i.e,$ $f(L_{n}(t))=F_{n}(t,x).$ Applying $L_{n}^{-1},$ this becomes $f(t)=F_{n}(L_{n}^{-1}(t), f(L_{n}^{-1}(t))).$ 
\end{proof}

\section{Comparison of IF and FIF for the same data set}
\begin{theorem}
Consider a set of data points $\{(t_{n}, x_{n}):n=0,1,..,N\}.$ Let $f$ be the classical interpolation function to this data set. Let the set be assigned with an IFS $\{(L_{n}(t),F_{n}(t,x)):n=1,2,...,N\}.$ The corresponding affine fractal interpolation function be $g.$ Then, the function $f$ is approximately equal to $g$   if there exists a scale vector $\alpha^{'}$ such that $$||f-g||_{\infty}\leq w_{f}(h)+\frac{2|\alpha^{'}|_{\infty}}{1-|\alpha^{'}|_{\infty}}||f||_{\infty}$$ where $h$ is the constant step length $h=t_{n}-t_{n-1}$ and $w_{f}(h)$ is the modulus of continuity of $f.$ On the converse, the function $g$ will always be an interpolation function to the data set.
\end{theorem}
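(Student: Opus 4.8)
The plan is to route the comparison through the piecewise linear interpolant $\ell$ of the data and to exploit that the affine fractal interpolation function $g$ is the fixed point of the Read-Bajraktarevic operator $T$ from the previous theorem. Writing $\delta=|\alpha'|_\infty=\max_n|\alpha_n|<1$ for the contractivity factor of $T$, I would first split
\[
\|f-g\|_\infty\le\|f-\ell\|_\infty+\|\ell-g\|_\infty
\]
and treat the two pieces separately. The first piece is classical interpolation theory: on each subinterval $I_n=[t_{n-1},t_n]$ of length $h$, the value $\ell(t)$ is a convex combination of $x_{n-1}=f(t_{n-1})$ and $x_n=f(t_n)$, so $f(t)-\ell(t)$ is a convex combination of $f(t)-f(t_{n-1})$ and $f(t)-f(t_n)$, each controlled by the oscillation of $f$ over a gap of length $h$. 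This yields $\|f-\ell\|_\infty\le w_f(h)$.

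For the second piece I would use the standard a posteriori contraction estimate. Since $\ell\in\mathbb{F}$ and $g=Tg$ with $T$ a $\delta$-contraction (established in the preceding theorem),
\[
\|\ell-g\|_\infty=\|\ell-Tg\|_\infty\le\|\ell-T\ell\|_\infty+\delta\,\|\ell-g\|_\infty ,
\]
whence $\|\ell-g\|_\infty\le\frac{1}{1-\delta}\|\ell-T\ell\|_\infty$. Everything then reduces to bounding the single-step defect $\|\ell-T\ell\|_\infty$.

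Computing this defect is the \emph{main obstacle}, and the place where the explicit affine form $F_n(t,x)=\alpha_n x+q_n(t)$ must be used. Fixing $t\in I_n$ and setting $u=L_n^{-1}(t)\in I$, I would insert the endpoint conditions $\alpha_n x_0+q_n(t_0)=x_{n-1}$ and $\alpha_n x_N+q_n(t_N)=x_n$ to identify the affine function $q_n$ as $q_n=\ell_n\circ L_n-\alpha_n b$, where $\ell_n$ is the linear piece of $\ell$ on $I_n$ and $b$ is the chord joining $(t_0,x_0)$ to $(t_N,x_N)$. Substituting $q_n(u)=\ell_n(L_n(u))-\alpha_n b(u)=\ell(t)-\alpha_n b(u)$ into $(T\ell)(t)=\alpha_n\ell(u)+q_n(u)$ produces the clean cancellation
\[
(T\ell)(t)-\ell(t)=\alpha_n\big(\ell(u)-b(u)\big).
\]
Since $\ell$ attains its extrema among the data ordinates and $b$ lies between $x_0$ and $x_N$, both satisfy $\|\ell\|_\infty\le\|f\|_\infty$ and $\|b\|_\infty\le\|f\|_\infty$; hence $|(T\ell)(t)-\ell(t)|\le|\alpha_n|(\|\ell\|_\infty+\|b\|_\infty)\le 2\delta\|f\|_\infty$, giving $\|\ell-T\ell\|_\infty\le 2|\alpha'|_\infty\|f\|_\infty$. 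Feeding this back through the two estimates yields exactly
\[
\|f-g\|_\infty\le w_f(h)+\frac{2|\alpha'|_\infty}{1-|\alpha'|_\infty}\|f\|_\infty .
\]

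For the converse, I would argue directly that $g$ always interpolates the data from its being the fixed point of $T$: evaluating $g=Tg$ at a node $t_n$ and using the endpoint behaviour of $L_n,L_{n+1}$ and of $F_n,F_{n+1}$ precisely as in the continuity computation of the previous theorem shows $g(t_n)=x_n$ for every $n$, independently of the chosen scale vector. The delicate point throughout is the defect identity above—keeping track of which linear piece $\ell_n$ is active on $I_n$ and verifying the cancellation that forces the constant $2$ rather than something larger.
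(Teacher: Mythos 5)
Your proposal is correct, but it does substantially more work than the paper does. For the quantitative estimate the paper simply cites Lemma 3.2 of Navascu\'es--Sebasti\'an \cite{ni} and offers no argument of its own; you instead reconstruct that lemma from scratch, and your reconstruction is sound. The triangle-inequality split through the piecewise linear interpolant $\ell$, the bound $\|f-\ell\|_{\infty}\le w_{f}(h)$ via convex combinations on each $I_{n}$, the a posteriori contraction estimate $\|\ell-g\|_{\infty}\le(1-\delta)^{-1}\|\ell-T\ell\|_{\infty}$, and the defect identity $(T\ell)(t)-\ell(t)=\alpha_{n}\bigl(\ell(u)-b(u)\bigr)$ all check out; in particular your identification $q_{n}=\ell_{n}\circ L_{n}-\alpha_{n}b$ follows from the two endpoint conditions by uniqueness of the affine function through two points, and it agrees with the relation $q_{n}\circ L_{n}^{-1}(t)=g_{0}(t)-\alpha_{n}\,r\circ L_{n}^{-1}(t)$ that the paper itself records later (its Theorem 5.1). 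Your handling of the converse --- evaluating the fixed-point equation $g=Tg$ at a node and using the endpoint conditions --- is exactly what the paper does. The trade-off is clear: the paper's proof is a two-line citation plus the nodal computation, while yours is self-contained and makes visible where the constant $2|\alpha'|_{\infty}/(1-|\alpha'|_{\infty})$ actually comes from; the only cost is that you must keep track of which linear piece of $\ell$ is active, which you do correctly.
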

\begin{proof}
	The first part of the theorem is proved in lemma 3.2 of \cite{ni}. For the converse part, consider an arbitrary point $t_{n}.$ Since the fractal interpolation function is the unique, fixed point of the operator $T$ defined as $$(Tf)(t) \\ =F_{n}(L_{n}^{-1}(t), f(L_{n}^{-1}(t))),$$
	\begin{align*}
		f(t_{n})&=(Tf)(t_{n}) \\ &=F_{n}(L_{n}^{-1}(t_{n}), f(L_{n}^{-1}(t_{n}))) \\ &= F_{n}(t_{N},x_{N}) \\ &=x_{n}
	\end{align*} Therefore, $f$ is an interpolation function to the data set. Hence the proof. 
\end{proof}

\section{Relation between $g_{0}$ and $q_{n}$}
\begin{theorem}
Let $\{(t_{n},x_{n}): n=0,1,...,N\}$ be the given data set with IFS $\{(L_{n}(t),F_{n}(t,x)):n=1,2,...,N\}.$ Let $f$ be the fractal interpolation function corresponding to this data set. Then, $f$ satisfies another recursive relation $$f(t)=g_{0}(t)+\alpha_{n}(f-r)oL_{n}^{-1}(t)$$ for $t\in I_{n}$ where $g_{0}$ is a piecewise linear function through the points $\{(t_{n},x_{n}): n=0,1,...,N\}$ and $r$ is the line passing through $(t_{0},x_{0}), (t_{N},x_{N}).$  Moreover, the functions $g_{0}$ and $q_{n}$ are related by the equation $q_{n}oL_{n}^{-1}(t)=g_{0}(t)-\alpha_{n}roL_{n}^{-1}(t)$ for $t\in I_{n}.$
\end{theorem}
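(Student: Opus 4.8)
The plan is to deduce the new recursion directly from the recursive relation already established, namely $f(t)=F_{n}(L_{n}^{-1}(t),f(L_{n}^{-1}(t)))$ for $t\in I_{n}$, together with the affine form $F_{n}(t,x)=\alpha_{n}x+q_{n}(t)$ used throughout for the single variable case. Substituting this form into the known recursion gives
\begin{equation*}
f(t)=\alpha_{n}\,f(L_{n}^{-1}(t))+q_{n}(L_{n}^{-1}(t)),\qquad t\in I_{n}.
\end{equation*}
On the other hand, the target relation $f(t)=g_{0}(t)+\alpha_{n}(f-r)\circ L_{n}^{-1}(t)$ expands to $f(t)=g_{0}(t)+\alpha_{n}f(L_{n}^{-1}(t))-\alpha_{n}\,r(L_{n}^{-1}(t))$. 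Comparing the two expressions and cancelling the common term $\alpha_{n}f(L_{n}^{-1}(t))$, I see that the target recursion holds if and only if
\begin{equation*}
q_{n}(L_{n}^{-1}(t))=g_{0}(t)-\alpha_{n}\,r(L_{n}^{-1}(t)),\qquad t\in I_{n},
\end{equation*}
which is exactly the ``moreover'' identity. Hence both assertions of the theorem reduce to verifying this single identity, so the two halves of the statement are in fact equivalent rather than independent.

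To establish the identity on a fixed subinterval $I_{n}=[t_{n-1},t_{n}]$, I would exploit affinity. Since $L_{n}$ is an affine homeomorphism, $L_{n}^{-1}$ is affine; the line $r$ is affine and, in the standard construction, $q_{n}$ is affine as well (indeed the earlier proof writes $q_{n}$ in the form $q_{n1}t+q_{n0}$); and $g_{0}$ is linear on each $I_{n}$ by definition of the piecewise linear interpolant. Consequently both sides of the identity are affine functions of $t$ on $I_{n}$, so it suffices to check agreement at the two endpoints $t_{n-1}$ and $t_{n}$. Using $L_{n}(t_{0})=t_{n-1}$ and $L_{n}(t_{N})=t_{n}$, so that $L_{n}^{-1}(t_{n-1})=t_{0}$ and $L_{n}^{-1}(t_{n})=t_{N}$, the endpoint conditions imposed on $F_{n}$ in the construction, $F_{n}(t_{0},x_{0})=x_{n-1}$ and $F_{n}(t_{N},x_{N})=x_{n}$, translate into $q_{n}(t_{0})=x_{n-1}-\alpha_{n}x_{0}$ and $q_{n}(t_{N})=x_{n}-\alpha_{n}x_{N}$. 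A direct substitution then shows that at $t=t_{n-1}$ the left side equals $q_{n}(t_{0})=x_{n-1}-\alpha_{n}x_{0}$ while the right side equals $g_{0}(t_{n-1})-\alpha_{n}r(t_{0})=x_{n-1}-\alpha_{n}x_{0}$, with the analogous match $x_{n}-\alpha_{n}x_{N}$ at $t=t_{n}$. Equality at both endpoints forces equality of the two affine functions throughout $I_{n}$.

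I expect the main obstacle to be purely a matter of bookkeeping rather than depth: one must invoke the correct endpoint conditions (and hence the affine normalisation of $q_{n}$) and track the compositions with $L_{n}^{-1}$ at the nodes. A convenient alternative that sidesteps even this is to \emph{define} $g_{0}$ on $I_{n}$ by $g_{0}(t):=q_{n}(L_{n}^{-1}(t))+\alpha_{n}r(L_{n}^{-1}(t))$ and then verify that this $g_{0}$ is the piecewise linear interpolant of the data: it is affine on each $I_{n}$, and evaluating at the interior node $t_{n}$ from either side gives $g_{0}(t_{n})=x_{n}$, which simultaneously confirms continuity across subintervals and that $g_{0}$ passes through every data point $(t_{n},x_{n})$. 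Either route closes the argument, the whole content being the equivalence observed in the first step.
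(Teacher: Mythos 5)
Your proof is correct, but it takes a genuinely different route from the paper. The paper's own proof is essentially a citation: it invokes Lemma 3.2 of the Navascu\'es--Sebasti\'an reference for the identity $(Tf)(t)=g_{0}(t)+\alpha_{n}(f-r)\circ L_{n}^{-1}(t)$ and then appeals to the fixed-point property of $T$, with the $g_{0}$--$q_{n}$ relation likewise delegated to that lemma. You instead give a self-contained derivation: you reduce both assertions to the single identity $q_{n}\circ L_{n}^{-1}=g_{0}-\alpha_{n}\,r\circ L_{n}^{-1}$ on $I_{n}$ by subtracting the already-established recursion $f=\alpha_{n}f\circ L_{n}^{-1}+q_{n}\circ L_{n}^{-1}$, and then verify that identity by observing that both sides are affine on $I_{n}$ and agree at the two endpoints, where the endpoint conditions $F_{n}(t_{0},x_{0})=x_{n-1}$ and $F_{n}(t_{N},x_{N})=x_{n}$ yield $q_{n}(t_{0})=x_{n-1}-\alpha_{n}x_{0}$ and $q_{n}(t_{N})=x_{n}-\alpha_{n}x_{N}$, matching $g_{0}(t_{n-1})-\alpha_{n}r(t_{0})$ and $g_{0}(t_{n})-\alpha_{n}r(t_{N})$ respectively. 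The endpoint computations check out ($g_{0}(t_{n-1})=x_{n-1}$, $r(t_{0})=x_{0}$, etc.), and the two-point argument for affine functions is sound. What your approach buys is transparency: it makes explicit that the theorem is a direct consequence of the endpoint conditions on $F_{n}$ --- which is precisely the theme of this paper --- rather than an imported fact, and your observation that the two halves of the statement are equivalent given the earlier recursion is a clarification the paper does not make. The paper's citation-based route is shorter but leaves the reader to consult the external lemma for the actual mechanism.
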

\begin{proof}
By lemma 3.2 in \cite{ca},  $(Tf)(t)=g_{0}(t)+\alpha_{n}(f-r)oL_{n}^{-1}(t)$ for $t\in I_{n}$ where $g_{0}$ is a piecewise linear function through the points $\{(t_{n},x_{n}): n=0,1,...,N\}$ and $r$ is the line passing through $(t_{0},x_{0}), (t_{N},x_{N}).$ Since the fractal interpolation function is the unique, fixed point of $T,$ it is trivial that $$f(t)=g_{0}(t)+\alpha_{n}(f-r)oL_{n}^{-1}(t)$$ for $t\in I_{n}$ with $g_{0}$ and $r$ as specified above. The relation between $g_{0}$ and $q_{n}$ is also established in the lemma 3.2.
\end{proof}

\begin{theorem}
The function $g_{0}$ can also be calculated by solving the system of two equations given below.
\begin{align*}
a_{n}t_{n}+b_{n} &=x_{n} \\
a_{n-1}t_{n-1}+b_{n-1} &=x_{n-1} 
\end{align*}
\end{theorem}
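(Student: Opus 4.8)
The plan is to lean on the previous theorem, where $g_0$ has already been identified as the piecewise linear interpolant of the data $\{(t_n,x_n):n=0,1,\dots,N\}$. The only thing to exploit is the meaning of \emph{piecewise linear}: on each subinterval $I_n=[t_{n-1},t_n]$ the function $g_0$ restricted to $I_n$ is a single affine map, which I will write as $g_0(t)=a_n t+b_n$. Being the interpolant, this affine piece must agree with the data at the two endpoints of $I_n$, namely $(t_{n-1},x_{n-1})$ and $(t_n,x_n)$. Imposing these two endpoint conditions produces exactly the stated $2\times 2$ system, and solving it recovers $g_0$ on $I_n$; letting $n$ run from $1$ to $N$ then reconstructs $g_0$ on all of $[t_0,t_N]$.

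Concretely, I would first record that $g_0$ on $I_n$ has only two scalar unknowns, the slope $a_n$ and the intercept $b_n$. Next I would write the two interpolation constraints as a linear system with coefficient matrix $\begin{pmatrix} t_n & 1 \\ t_{n-1} & 1 \end{pmatrix}$ applied to $(a_n,b_n)^{\top}$ with right-hand side $(x_n,x_{n-1})^{\top}$. Computing the determinant gives $t_n-t_{n-1}$, which is strictly positive because the nodes are strictly ordered, $t_0<t_1<\cdots<t_N$. Hence the system is nonsingular, admits a unique solution, and solving it (e.g.\ by Cramer's rule) yields $a_n=\frac{x_n-x_{n-1}}{t_n-t_{n-1}}$ and $b_n=x_n-a_n t_n$. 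These are precisely the slope and intercept of the affine segment of $g_0$ over $I_n$, so the system is an equivalent and fully elementary way to compute $g_0$.

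There is essentially no hard step here; the single point requiring care is the nonsingularity of the system, and this is immediate from the strict ordering of the interpolation nodes, which guarantees $t_n\neq t_{n-1}$. I would also note, for consistency, that the two equations both refer to the same $n$-th segment: a single affine piece on $I_n$ has exactly one slope and one intercept, so the second equation is the interpolation constraint $a_n t_{n-1}+b_n=x_{n-1}$ at the left endpoint of $I_n$ (the displayed subscript should be read accordingly), paired with the right-endpoint constraint $a_n t_n+b_n=x_n$. With that reading the claim reduces to the classical fact that two distinct points determine a unique line.
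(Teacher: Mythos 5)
Your proposal is correct and follows essentially the same route as the paper: write the affine piece of $g_{0}$ on $I_{n}$ as $a_{n}t+b_{n}$, impose the two endpoint interpolation constraints, and solve the resulting $2\times 2$ system for the slope and intercept. Your reading of the second equation as $a_{n}t_{n-1}+b_{n}=x_{n-1}$ (same index $n$) is exactly what the paper's ``subtract (1) from (2)'' step tacitly assumes, and your explicit nonsingularity check via $t_{n}-t_{n-1}\neq 0$ is a small improvement over the paper's version, whose final formula also garbles the $t$/$x$ notation.
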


\begin{proof}
Since $q_{n}$ is a linear function in $t,$ the functions $g_{0}, r$ will also be linear in $t.$ Then, writing $g_{0}(t)=a_{n}t+b_{n}, $ for some constants $a_{n}, b_{n},$
consider the given system of equations 	\begin{align}
	a_{n}t_{n}+b_{n} &=x_{n} \\
	a_{n-1}t_{n-1}+b_{n-1} &=x_{n-1}. 
\end{align} Subtracting (1) from (2) gives $a_{n}=\frac{y_{n}-y_{n-1}}{x_{n}-x_{n-1}}.$ \vspace{0.2cm}
\\ Substituting this in (1), gives $b_{n}=\frac{y_{n-1}x_{n}-y_{n}x_{n-1}}{x_{n}-x_{n-1}}.$ 
\vspace{0.2cm} \\ Then 
\begin{align*}
	g_{0}(t)  &=a_{n}t+b_{n} \\ &=\Big(\frac{y_{n}-y_{n-1}}{x_{n}-x_{n-1}}\Big)t+\Big(\frac{y_{n-1}x_{n}-y_{n}x_{n-1}}{x_{n}-x_{n-1}}\Big), 
\end{align*}
which is the piecewise linear function with vertices $(t_{n},x_{n}), $ for $n\in \{1,2,...,N\}.$
\end{proof}

\section{Significance of the end point conditions on $F_{n}$}
The iterted function systems for the single variable and bivariable fractal interpolation functions are given below:\\
\noindent Consider a single variable fractal interpolation function. The IFS for the function is $\{(L_{n}(t), F_{n}(t,x)): n=1,2,..,N\}.$ The role of $L_{n}$ is to contract the entire interpolating domain $D$. The interpolating domain is a closed interval when a single variable function is considered. It contracts the interval into its subintervals by assigning the end points of $I$ into the respective end points of $I_{n}.$ The second component function $F_{n}$ in the IFS, responsible for vertical scaling, normally consists of another function $q_{n}$ of the input arguments $q_{n}(t)=q_{n1}t+q_{n0}.$ The constants in $q_{n}$ are obtained by solving the end point conditions on $F_{n}, i.e,$ the conditions,  
$$	F_{n}(t_{0}, x_{0})=x_{n-1} \,\,\,\, F_{n}(t_{N}, x_{N})=x_{n} $$ for $n=1,2,...,N.$

When a two variable function with triangular region is considered, the IFS is $\{(L_{n}(x,y), F_{n}(x,y,z)): n=1,2,..,N\}$ where $L_{n}$ contracts the entire triangle into its subtriangles. The function $F_{n}$ is such that $F_{n}(x,y,z)=\alpha_{n}z+q_{n}(x,y)$ with $q_{n}(x,y)=a_{n}x+b_{n}y+c_{n}.$  The respective end point conditions are 
$$ 	F_{n}(x_{1}, y_{1}, z_{1})=z_{n1} \,\,\,\, F_{n}(x_{2}, y_{2},z_{2})=z_{n2}, \,\,\,\,F_{n}(x_{3}, y_{3},z_{3})=z_{n3} $$ for $n=1,2,...,N$ where $(x_{1}, y_{1}, z_{1}), \,\,\, (x_{2}, y_{2}, z_{2}), \,\,\, (x_{3}, y_{3}, z_{3}) $ are the vertices of the triangle $D$ with   $z_{n1}, \,\,\, z_{n2}, \,\,\, z_{n3}$ the value of the function at the vertices of the $n-th$ subtriangle. \\
\indent For rectangular region, the IFS is $\{(\phi_{n}(x), \psi_{m}(y), F_{n,m}(x,y)): n=1,2,..,N, m=1,2,...,M\}$ where $\phi_{n}$ contracts along the $X$ axis and $\psi_{m}$ contracts along the $Y$ axis. The function $F_{n,m}$ is such that $F_{n,m}(x,y,z)=\alpha_{n,m}z + q_{n,m}(x,y)$ where $ q_{n,m}(x,y)=e_{n,m}x+f_{n,m}y+g_{n,m}xy+k_{n,m}.$ Since it is a rectangle, four conditions are there for $F_{n,m}. i.e, $
$$ 	F_{n,m}(x_{0}, y_{0}, z_{0,0})=z_{n-1,m-1} \,\,\,\, F_{n,m}(x_{N}, y_{M},z_{N,M})=z_{n,m} $$ 
$$ 	F_{n,m}(x_{0}, y_{M} z_{0,M})=z_{n-1,m} \,\,\,\, F_{n,m}(x_{N}, y_{0},z_{N,0})=z_{n,m-1} $$ for $n=1,2,...,N \,\,\, m=1,2,...,M.$ 

In short, for all the shapes, the first component function in the IFS is responsible for the contraction of the respective interpolation domains. These functions carry out contractions by assigning the corner points of the domain $D$ into the respective corners of the subdomain $D_{n}.$ The number of end point conditions will depend on the number of corners of $D.$ The second component function involves vertical scaling. Here also, the end point conditions varies according to the interpolating domain $D.$  This component function consists of another function of the input arguments. The coefficients of the input arguments are obtained by solving the end point conditions.\\

\subsection{Consequences of removing any of the end point conditions on the second component function}
\begin{itemize}
\item If any one of the end point condition is removed, the expression for $q_{n}$ changes which affects the shape of the attractor.
\item In the formula for the numerical integration, numerator varies according to $q_{n},$ thereby making a difference in the integral value.   
\item With the change in $q_{n},$ the approximating function $g_{0}$ will be changed, affecting the error analysis techniques.
\end{itemize}

These points are specific to single variable functions, but also, relevant for two variable functions.

\section{Implementation of the Different Techniques used to Make $'T'$ Well Defined}
A set of data points is given with an IFS to the data and a freely chosen vertical scaling factor between -1 and 1. Before finding the fractal interpolation function to this data set, the existence of such a function has to be verified. The obtained function, defined from the interpolating domain to the set of real numbers, must be a continuous, interpolating function whose graph coincides with the attractor of the IFS. \\
\indent In order to establish the existence of such a function, a complete metric space consisting of the set of all continuous functions satisfying certain properties is created. The properties, used to verify the interpolating nature of the function, varies according to the interpolating domain.  Then, an operator T is defined on the space, which has to be shown contractive. Then, its unique fixed point is the required FIF. Usually, T is defined in terms of $F_{n,m}$, which is defined piecewisely on each subdomain. It is trivial that the boundaries of each subdomain is shared by the nearest subdomain. So, the operator T will be well defined only if it provides the same output along the boundaries. But, usually, T gives different outputs along the boundaries, which makes it a discontinuous map, which then implies that T is not contractive. But, there are various techniques available in order to transform T into a well defined map. Two such methods are discussed below. 
\subsection{Formulation of $G_{n,m}$ from the existing function $F_{n,m}$}
In \cite{nt}, authors have introduced a function $G_{n,m}$ and formulated a new IFS with $G_{n,m}.$ An explanation for the functional representation of $G_{n,m}$ is provided below:\\
Consider the data set $\{(x_{n},y_{m},z_{n,m}): n=0,1,...,N, m=0,1,..,M\}$ where $z_{n,m}=f(x_{n},y_{m}).$ Let the usual IFS be $w_{n,m}(x,y,z)=(\phi_{n}(x),\psi_{m}(y), F_{n,m}(x,y,z)), $ where the functions $\phi_{n}, \,\, \psi_{m}, \,\, F_{m,n}$ are defined as in \cite{bf}. Now, defining the operator $T$ to be $(Tf)(x,y)=F_{n,m}(\phi_{n}^{-1}(x),\psi_{m}^{-1}(y),f(\phi_{n}^{-1}(x),\psi_{m}^{-1}(y))),$ then consider the right vertical side of the subrectangle $I_{n} \times J_{m},$ ie, along the line $x=x_{n}, y_{m-1}\leq y \leq y_{m}.$ Since this side is shared by the subrectangle $I_{n+1}\times J_{m},$ the operator $T$ is defined in two ways:
Considering this line as a side of  $I_{n} \times J_{m},$
\begin{align*}
\nonumber 
(Tf)(x_{n},y) &=F_{n,m}(\phi_{n}^{-1}(x_{n}), \psi_{m}^{-1}(y), f(\phi_{n}^{-1}(x_{n}), \psi_{m}^{-1}(y))) \\
\nonumber 
&= F_{n,m}(x_{N}, \psi_{m}^{-1}(y), f(\phi_{n}^{-1}(x_{n}), \psi_{m}^{-1}(y)))
\end{align*}

If it is in $I_{n+1}\times J_{m},$ 
\begin{align*}
 (Tf)(x_{n},y) &=F_{n+1,m}(\phi_{n+1}^{-1}(x_{n}), \psi_{m}^{-1}(y), f(x_{N}, \psi_{m}^{-1}(y))) \\
&= F_{n+1,m}(x_{0}, \psi_{m}^{-1}(y), f(x_{0}, \psi_{m}^{-1}(y)))
\end{align*}
Hence, the operator $T$ have two different values along the line. Therefore, Define the operator $T$ such that  
\begin{align*}
\nonumber
(Tf)(x_{n},y)&=\frac{\Big[  F_{n,m}(x_{N}, \psi_{m}^{-1}(y), f(\phi_{n}^{-1}(x_{n}), \psi_{m}^{-1}(y))) + F_{n+1,m}(x_{0}, \psi_{m}^{-1}(y), f(x_{0}, \psi_{m}^{-1}(y)))	\Big]}{2}
\end{align*}

Since $T$ is piecewisely defined on each subrectangle $I_{m}\times J_{n},$ it must be defined as
$$(Tf)(x,y)=G_{n,m}(\phi_{n}^{-1}(x), \psi_{m}^{-1}(y), f(\phi_{n}^{-1}(x), \psi_{m}^{-1}(y)))$$   where 
\begin{align*}
\nonumber 
G_{n,m}(\phi_{n}^{-1}(x_{n}), \psi_{m}^{-1}(y), f(\phi_{n}^{-1}(x_{n}), \psi_{m}^{-1}(y)))
&=G_{n,m}(x_{N}, \psi_{m}^{-1}(y), f(x_{N}, \psi_{m}^{-1}(y))) 	
\end{align*}
Comparing the earlier expression, it is obtained that $$G_{n,m}(x_{N},y,z)=\frac{\Big[F_{n,m}(x_{N}, y, z) + F_{n+1,m}(x_{0}, y, z)\Big]}{2}.$$
Similarly, taking each side of the subrectangle, the expression for $G_{n,m}$ as in \cite{nt} can be reached.
The well definiteness of the operator $T$ is clearly established in \cite{nt} with this new IFS. 
\subsection{Making the data points on the boundary  to be coplanar}
Authors in \cite{bf} used the condition of collinearity of the data set to prove the well definitess of $T.$ This work is based on rectangular domain where the interpolation points on the boundary of each subrectangle is assumed to be collinear. Consider the collinear data sets $\{ (x_{0},y_{m}, z_{0,m}): m=0,1,..,M\},$ $\{ (x_{N},y_{m}, z_{N,m}): m=0,1,..,M\},$ $\{ (x_{n},y_{0}, z_{n,0}): n=0,1,..,N\},$ $\{ (x_{n},y_{M}, z_{n,M}): n=0,1,..,N\}.$ The collinearity implies that, for example, the set $\{ (x_{0},y_{m}, z_{0,m}): m=0,1,..,M\}$ is such that $x=x_{0}, \,\, y=(1-\lambda)y_{0}+\lambda y_{M}, z=\lambda z_{0,0} + (1-\lambda)z_{0,M},$ for all $\lambda \in [0,1].$ 
\\
Since the data is assumed to be collinear and the set $\mathcal{F}$ is taken to be the set of all continuous functions satisfying 
\begin{align*}
\nonumber
f(x_{0},(1-\lambda)y_{0}+\lambda y_{M})&=(1-\lambda)z_{0,0}+\lambda z_{0,M} \\
\nonumber 
f(x_{N},(1-\lambda)y_{0}+\lambda y_{M}) &=(1-\lambda)z_{N,0}+\lambda z_{N,M}
\\
\nonumber
f((1-\lambda)x_{0}+\lambda x_{N},y_{0}) &=(1-\lambda)z_{0,0}+\lambda z_{N,0} \\
\nonumber  
f((1-\lambda)x_{0}+\lambda x_{N},y_{M}) &=(1-\lambda)z_{0,M}+\lambda z_{N,M}	
\end{align*} 
\noindent Applying $T$ on the side of the subrectangle $I_{n}\times J_{m},$, $i.e,$ along $x=x_{n}, \,\, y_{m-1}\leq y \leq y_{m},$
\begin{align*}
\nonumber 
(Tf)(x_{n},y)&=(Tf)(x_{n}, (1-\lambda)y_{m-1}+\lambda y_{m}) \\
\nonumber
&=F_{n,m}(x_{N}, (1-\lambda)y_{0}+\lambda y_{M}, f(x_{N}, (1-\lambda)y_{0}+\lambda y_{M}))
\end{align*}
\noindent By expanding $F_{n,m}, $ the expression reaches at $$(Tf)(x_{n},y)= (1-\lambda) z_{n,m-1}+\lambda z_{n,m}$$
\\
Considering the line as a side of $I_{n+1}\times J_{m},$ the same expression can be achieved. Hence, the operator is well defined along the boundaries of each subrectangle.

\section{Conclusion}
This paper analyses the steps involved in the construction of a fractal interpolation function and provides a comparison on the difference between interpolation functions and fractal interpolation functions. From this study, it is observed that the continuity of the fractal interpolation function can be achieved whenever the Read-Bajraktarevic operator, used to formulate the fractal interpolation function is well defined. Two techniques to implement the well definiteness of the operator have been described in this work. The recursive relation, satisfied by the fractal interpolation function is established using Read-Bejraktaarevic operator. The relation is also proven logically. Analysing the end point conditions on the IFS, it is obtained that the removal any of these will affect the graphical as well as the integral results of fractal interpolation functions. It also affects the approximating function, involved in the recursive relation, since the function is depended on the IFS. While establishing this dependency, this paper also provides an alternative method to find the approximating function.

\end{document}